\providecommand{\U}[1]{\protect\rule{.1in}{.1in}}
\newtheorem{theorem}{Theorem}
\newtheorem{corollary}[theorem]{Corollary}
\newtheorem{definition}[theorem]{Definition}
\newtheorem{example}[theorem]{Example}
\newtheorem{lemma}[theorem]{Lemma}
\newenvironment{proof}[1][Proof]{\noindent\textbf{#1.} }{\ \rule{0.5em}{0.5em}}
\title{ Improving integrability via absolute summability: a general version of Diestel's Theorem}
\author{{\bf D. Pellegrino} \\ {\small Departamento de Matem\'atica}\\  {\small Universidade
Federal da Para\'iba} \\  {\small 58.051-900, Jo\~ao Pessoa, Brazil} \\  {\small e-mail: dmpellegrino@gmail.com}\\ {\bf P. Rueda }\\   {\small  Departamento de An\'alisis Matem\'atico} \\
 {\small  Universidad de Valencia}\\ {\small 46100 Burjassot - Valencia, Spain}\\   {\small e-mail: pilar.rueda@uv.es}\\  {\bf E.A. S\'anchez-P\'erez} \\   {\small Instituto Universitario de
Matem\'{a}tica Pura y Aplicada}\\   {\small Universitat Polit\`ecnica de Val\`encia} \\  {\small Camino
de Vera s/n, 46022 Valencia, Spain} \\ {\small  e-mail: easancpe@mat.upv.es}}
\date{}
\begin{document}

\maketitle

\thanks{AMS subject classification(2010): Primary 46E40, Secondary 47B10 \\ Keywords: absolutely summing operators, Pettis integrable function, Bochner integrable function.}

\begin{abstract}
A classical result by J. Diestel establishes that the composition  of a summing operator with a (strongly measurable) Pettis integrable function gives a Bochner integrable function. In this paper we show that a much more general result is possible regarding the improvement of the integrability of vector valued functions by the summability of the operator. After proving a general result, we center our attention in the particular  case given by the $(p,\sigma)$-absolutely continuous operators, that allows to prove a lot of special results on integration improvement for selected cases of classical Banach spaces ---including $C(K)$, $L^p$ and Hilbert spaces--- and operators ---$p$-summing, $(q,p)$-summing and $p$-approximable operators---.
\end{abstract}

\section{Introduction}

Let $X$ and $Y$ be Banach spaces.
Let $(\Omega,\Sigma,\mu)$ be a finite, positive, non-atomic measure space, and let $1\leq p<\infty$. Consider the space ${\mathcal P}_p(\mu,X)$  of all strongly measurable Pettis  $p$-integrable functions with respect to $\mu$, $f:\Omega\to X$ with the norm
$$
\|f\|_{\mathcal P_p}:=\sup\Big\{ \Big(\int |x'(f(w))|^p\, d\mu(w)\Big)^{1/p}:x'\in X^*,\|x'\|\leq 1\Big\},
$$
and let ${\mathcal B}_p(\mu,X)$ be the space of all Bochner $p$-integrable functions with respect to $\mu$, $f:\Omega\to X$ with the norm
$$
\|f\|_{\mathcal B_p}:= \Big(\int \|f(w)\|^p\, d\mu(w)\Big)^{1/p}.
$$

A linear operator $T\colon X\longrightarrow Y$ is absolutely $p$-summing if $(T(x_n))_{n=1}^\infty$ is absolutely $p$-summable in $Y$ whenever $(x_n)_{n=1}^\infty$ is weakly $p$-summable in $X$.
Diestel  proved in 1972 that an absolutely $1$-summing (from now on simply called absolutely summing) operator not only improves the summability of sequences but also the integrability of functions in the following sense (see \cite{Di}): \textit{ Consider $u\in {\mathcal L}(X;Y)$ and let $\tilde u:{\mathcal P}_1(\mu,X)\to {\mathcal P}_1(\mu,Y)$ be given by $(\tilde uf)(w):=u(f(w))$, $w\in \Omega$ and $f\in {\mathcal P}_1(\mu,X)$. Then $\tilde u$ belongs to ${\mathcal L}({\mathcal P}_1(\mu,X); {\mathcal B}_1(\mu,Y))$ if and only if  $u$ is absolutely summing.}

The transcendence of absolutely summing operators and their connection, provided by Diestel, with integrability has produced the appearance of several works dealing with improvements of integrability of functions via composition with absolutely summing operators. For instance,  Rodr\'{\i}guez \cite{Ro} has proved that if $u$ is absolutely summing then $\tilde u$ maps Dunford integrable functions to scalarly equivalent functions to Bochner integrable ones. Furthermore, he has proved that $U$ maps Birkhoff integrable or McShane integrable functions to Bochner integrable functions (see \cite{Ro} for definitions and for some other related works). Our aim  is to show that a variant of this improvement is fulfilled by  $R,S$-abstract summing operators in the sense of \cite{BoPeRu} (see also \cite{PeSa}). This  general result has many consequences. We will center our attention in a particular case, that contains  the classical result by Diestel and let to consider it  as a starting point of a scale of results concerning the relation between $(p,q)$-summability of $u$ and the improvement of integrability provided by $\tilde u$. In order to do it, our main tool is the study of the improvement of integrability given by operators belonging to a particular class of interpolated operator ideals of summing operators: the so called ideals of $(p,\sigma)$-absolutely continuous operators (see \cite{LS93,LS00,Matt87}). We will need to introduce also a new definition of integrability of strongly measurable (vector valued) functions that is given by some interpolation formula between the Bochner integral and the Pettis integral.

However, we  establish first  our main result ---Theorem \ref{prinint}--- in more abstract terms. The reason is that several recent papers have shown that, in fact, the main arguments that prove the important results on ideals of operators improving summability, can be formulated and proved in a very general setting, and all the properties that hold for the classical case work also in this abstract framework. This also happens in the case of the problem analyzed in the present paper. In  \cite{BoPeRu}, the definition of RS-abstract p-summing operator is given, and a generalized Pietsch's Domination Theorem is proved for it. In fact, it is proved that all known Pietsch type theorems are particular cases of this one, including the multilinear cases. For this reason, this is the definition that we adopt. In \cite{PeSa}, an even more general definition of $RS$-summing operator is given removing two hypothesis and still preserving the Domination Theorem. In this direction, a general framework regarding all these arguments is presented in \cite{PeSaSe}, and it is shown how can a lot of results of this kind be considered as particular instances of a general principle; the interested reader can find there ---and in the references in this paper---, a general explanation of the general topics and techniques in operator theory that are covered with the RS-abstract summing operators.

In the second part of the paper (Section \ref{4}), we present a lot of examples and applications of our results for particular sets of classical Banach spaces and operators that are better known than the $(p,\sigma)$-absolutely continuous operators. We will follow mainly the results in \cite{Matt87}, and also in \cite{LS93,LS97}.

\section{Basic concepts and notation}

We will use standard Banach space notation. If $1 \le p \le \infty$, we will write $p'$ for the extended real number that satisfies that $1/p+1/p'=1$. Unless stated otherwise $X$, $Y$, $E$ and $F$ will be Banach spaces. We will write $X^*$ for the dual of $X$ and $B_{X^*}$ denotes the closed unit ball of $X^*$ endowed with the weak star topology. Our fundamental references for the general theory of summing operators and general operator ideals are
\cite{DF92,DJT95,Pie80}.  The definition of $RS$-abstract summing operator is the following.

Consider  arbitrary sets $X$ and $Y$
and
$E$, let $\mathcal H$
be a family of mappings from
$X$
to
$Y$,
$G$
be a Banach space and
$K$
be a compact
Hausdorff topological space.

Let $R: K \times E \times G \to [0,\infty)$ and $S:\mathcal H \times E \times G \to [0,\infty)$ be mappings such that
the mapping $R_{x,b}:K \to [0,\infty)$, $R_{x,b}(\varphi)=R(\varphi,x,b)$ is continuous for each $x \in E$ and $b \in G.$


Using these elements, the following notion is introduced in  \cite[Def.2.1]{BoPeRu} and \cite{PeSa}.

\begin{definition}
Let $R$ and $S$ as above and $0 < p < \infty.$ A mapping $f \in \mathcal H$ is said to be $RS$-abstract $p$-summing if there is a constant $C_1 >0$ such that
$$
\Big( \sum_{j=1}^m S(f,x_j,b_j)^p \Big)^{\frac{1}{p}} \le C_1 \sup_{\varphi \in K}
\Big( \sum_{j=1}^m R(\varphi,x_j,b_j)^p \Big)^{\frac{1}{p}},
$$
for all $x_1,...,x_n \in E$, $b_1,...,b_m \in G$ and $m \in \mathbb N$.
\end{definition}

 A particular instance of this abstract notion of summability is given by the interpolated class of linear operators cited above. Let $1 \le p < \infty$ and $0 \le \sigma < 1$. A linear operator $T:X \to Y$ is said to be $(p, \sigma)$-absolutely continuous if for every $x_1,...,x_n \in X$,
$$
\Big( \sum_{i=1}^n \big\| T(x_i) \big\|^{\frac{p}{1-\sigma}} \Big)^{\frac{1-\sigma}{p}} \le \sup_{x' \in B_{X^*}} \Big( \sum_{i=1}^n \big( | \langle x_i, x' \rangle|^{1-\sigma} \| x_i \big\|^\sigma \big)^{\frac{p}{1-\sigma}} \Big)^{\frac{1-\sigma}{p}}.
$$
The space of all linear  operators from $X$ to $Y$  satisfying this property will be denoted in this paper by $\Pi_p^\sigma(X,Y)$. It is a normed space, being $\pi_p^{\sigma}(T)$ the norm computed as the infimum of all the constants $C>0$ in the inequality above (see  \cite{LS93,LS00,Matt87}). It is easy to see that 
\begin{equation}
\label{000}
\Pi_{p_1}^{\sigma_1} \subseteq \Pi_{p_2}^{\sigma_2}
\end{equation}

for $1 \le p_1 \le p_2 < \infty$ and $0 \le \sigma_1 \le \sigma_2 < 1$ (see \cite[Prop. 3.3]{Matt87}; see also \cite{LS93}).

The usual space of $(p,q)$-summing operators will be relevant in this paper too, for $1 \le q \le p < \infty$. This is given by all linear operators $T:X \to Y$ for which  there exists a constant $C>0$ depending on $T$ such that  for every $x_1,...,x_n \in X$,
$$
\Big( \sum_{i=1}^n \big\| T(x_i) \big\|^{p} \Big)^{\frac{1}{p}} \le C\sup_{x' \in B_{X^*}} \Big( \sum_{i=1}^n | \langle x_i, x' \rangle|^{{q}} \Big)^{\frac{1}{q}}.
$$
We will use the symbol $\Pi_{p,q}(X,Y)$ to denote the component of this operator ideal when the linear operators from $X$ to $Y$ are considered, and the norm for such an operator $T$ will be denoted by $\pi_{p,q}(T)$  and defined as the infimum of all constants $C$ as above. For $p=q$, we get the operator ideal of $p$-summing operators; the notation will be in this case $\Pi_p(X,Y)$ and $\pi_p(T)$.

The following well-known result will be used several times in this paper. We write the proof for the aim of completeness; it can be found in \cite[Prop.4.2]{Matt87}.

\begin{lemma} \label{le}
Let $1 \le p < \infty$ and $0 \le \sigma <1$. Then $\Pi_p \subseteq \Pi_{\frac{p}{1 - \sigma}} \subseteq \Pi_{p}^{\sigma} \subseteq \Pi_{\frac{p}{1-\sigma},p}$.
\end{lemma}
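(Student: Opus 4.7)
The plan is to dispatch the three inclusions in order, each by a direct comparison between the defining norms. Throughout, I will write things in terms of the quantity $\|x_i\|$ versus $|\langle x_i,x'\rangle|$ and exploit the elementary bound $|\langle x_i,x'\rangle|\le \|x_i\|$ for $x'\in B_{X^*}$.

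For the first inclusion $\Pi_p\subseteq \Pi_{p/(1-\sigma)}$, I would simply invoke the classical monotonicity of $p$-summing ideals: since $0\le \sigma<1$ forces $p\le p/(1-\sigma)$, this is the standard fact $\Pi_p\subseteq \Pi_q$ for $p\le q$ (Diestel--Jarchow--Tonge). No work is required beyond citing it.

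For the second inclusion $\Pi_{p/(1-\sigma)}\subseteq \Pi_p^\sigma$, I would compare the right-hand sides of the two defining inequalities on a fixed finite family $x_1,\dots,x_n$ and a fixed $x'\in B_{X^*}$. Since $|\langle x_i,x'\rangle|\le \|x_i\|$, raising to the exponent $p\sigma/(1-\sigma)\ge 0$ yields
\begin{equation*}
|\langle x_i,x'\rangle|^{p/(1-\sigma)}
= |\langle x_i,x'\rangle|^p\,|\langle x_i,x'\rangle|^{p\sigma/(1-\sigma)}
\le |\langle x_i,x'\rangle|^p\,\|x_i\|^{p\sigma/(1-\sigma)}
= \bigl(|\langle x_i,x'\rangle|^{1-\sigma}\|x_i\|^\sigma\bigr)^{p/(1-\sigma)}.
\end{equation*}
Summing in $i$, taking the $(1-\sigma)/p$ power and the supremum over $x'$, the $p/(1-\sigma)$-summing inequality applied to $T$ yields the $(p,\sigma)$-absolutely continuous inequality with the same constant.

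For the third inclusion $\Pi_p^\sigma\subseteq \Pi_{p/(1-\sigma),p}$, I would start from the $(p,\sigma)$-absolutely continuous estimate and use the elementary observation that, with $M:=\sup_{x'\in B_{X^*}}\bigl(\sum_j|\langle x_j,x'\rangle|^p\bigr)^{1/p}$, each norm $\|x_i\|=\sup_{x'}|\langle x_i,x'\rangle|$ is bounded by $M$. Hence $\|x_i\|^{p\sigma/(1-\sigma)}\le M^{p\sigma/(1-\sigma)}$ for every $i$, so for each $x'\in B_{X^*}$,
\begin{equation*}
\sum_{i=1}^n \bigl(|\langle x_i,x'\rangle|^{1-\sigma}\|x_i\|^\sigma\bigr)^{p/(1-\sigma)}
= \sum_{i=1}^n |\langle x_i,x'\rangle|^p \|x_i\|^{p\sigma/(1-\sigma)}
\le M^{p\sigma/(1-\sigma)}\sum_{i=1}^n |\langle x_i,x'\rangle|^p.
\end{equation*}
Taking the $(1-\sigma)/p$ power and the supremum over $x'$ produces the factor $M^\sigma\cdot M^{1-\sigma}=M$, which is exactly the $(p/(1-\sigma),p)$-summing majorant.

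The main (though still mild) obstacle is the last inclusion: one has to notice that the norms $\|x_i\|$ appearing on the right of the $(p,\sigma)$-absolutely continuous inequality can be absorbed into the weak-$\ell_p$ constant $M$ because $\|x_i\|\le M$ automatically. Once this is spotted, all three inclusions become short calculations, and the argument even tracks the constants (the norms do not increase).
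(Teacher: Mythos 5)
Your proposal is correct and follows essentially the same route as the paper: both reduce the second and third inclusions to comparing the weak right-hand sides via $|\langle x_i,x'\rangle|\le\|x_i\|$ and then absorbing $\|x_i\|^{\sigma}$ into the weak $\ell_p$ quantity $M$ (the paper passes through $\max_i\|x_i\|^{\sigma}\le M^{\sigma}$, which is the same absorption). No substantive difference.
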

\begin{proof}
The first inclusion is well-known. For the other ones, first note that the strong summable parts ---the left hand sides--- of the inequalities defining the three operator ideals appearing in the  two other  inclusions are the same. Thus, the following inequalities for finite sets of vectors $x_1,...,x_n$ of a Banach space $X$ give the result.
$$
\sup_{x' \in B_{X^*}} \Big( \sum_{i=1}^n | \langle x_i, x' \rangle|^{\frac{p}{1-\sigma}}  \Big)^{\frac{1-\sigma}{p}} \le
\sup_{x' \in B_{X^*}} \Big( \sum_{i=1}^n  \big( |\langle x_i, x' \rangle|^{1-\sigma} \|x_i\|^{\sigma} \big)^{\frac{p}{1-\sigma}} \Big)^{\frac{1-\sigma}{p}}
$$
$$
= \sup_{x' \in B_{X^*}} \Big( \sum_{i=1}^n   |\langle x_i, x' \rangle|^{p} \|x_i\|^{\frac{p \sigma}{1-\sigma}} \Big)^{\frac{1-\sigma}{p}}
\le \max_{i=1,...,n} \|x_i\|^{\sigma} \cdot \sup_{x' \in B_{X^*}}  \big( \sum_{i=1}^n   |\langle x_i, x' \rangle|^{p} \big)^{\frac{1-\sigma}{p}}
$$
$$
\le \sup_{x' \in B_{X^*}}  \Big( \sum_{i=1}^n   |\langle x_i, x' \rangle|^{p} \Big)^{\frac{1}{p}} .
$$
\end{proof}

\section{A characterization of $RS$-abstract $p$-summing operators in terms of integrability.}

Let $K$ be a compact set and $H$ a space of operators. Let $E$ and $G$ be Banach spaces.
Let $S: H \times E \times G \to \mathbb R^+$ and $R: K \times E \times G \to \mathbb R^+$ be functions such that
\begin{equation}\label{cero}
S(u,0,b)=S(v,x,0)=0=R(\phi,0,b)=R(\varphi,x,0)
\end{equation}
for all $u,v\in H$, $x\in E$, $b\in G$, $\phi,\varphi \in K$.
 Let $(\Omega,\Sigma,\mu)$ be a non-atomic finite measure space, and let $M(\Omega,E)$ and $N(\Omega,G)$, $i=1,2$, be sets of strongly measurable (classes of $\mu$-a.e. equal) functions with values in $E$ and $G$ respectively. Consider subsets $M\subset M(\Omega,E)$ and $N\subset N(\Omega,G)$ that contain the simple functions. We define the real functions $\tilde S: H \times M\times N \times \Omega\to \mathbb R^+$ and $\tilde R: K \times M\times N \times \Omega \to \mathbb R^+$ given by
$$\tilde S(u,f,g,w):=S(u, f(w), g(w))$$ 
and 
$$\tilde R(\varphi,f,g,w):=R(\varphi,f(w),g(w)). $$

As our objective is to show how $RS$-abstract summing operators improve the integrability of functions, we will assume that the function $\tilde R_{\varphi, f,g}(w):=\tilde R(\varphi,f,g,w)$ is $\mu$-integrable for all $\varphi\in K$, $f\in M$ and $g\in N$. Then, under this assumption, our aim is to relate summability of $u$ with integrability of $\tilde S_{f,g}(w):=\tilde S(u, f,g, w)$, $f\in M$ and $g\in N$.




Notice that the definition of $u$ being $RS$-abstract $p$-summing is equivalent to the fact that there is $C>0$ such that for finite families
$x_1,...,x_n \in E$, $b_1,...,b_n \in G$ and  $a_{1,1},...,a_{1,n},...,a_{n,1},...,a_{n,n} \in \mathbb R^+$,
$$
\Big( \sum_{i,j=1}^n a_{i,j} S(u,x_i,b_j)^p \Big)^{1/p} \le C
\sup_{\varphi \in K} \Big( \sum_{i,j=1}^n a_{i,j} R(\varphi, x_i, b_j)^p \Big)^{1/p}.
$$

\begin{theorem} \label{prinint}
An operator $u:X \to Y$ is $RS$-abstract summing if and only if there is a constant $C>0$ such that
$$
\int_\Omega \tilde S(u,f,g,w)\, d\mu(w)\leq C\sup_{\varphi\in K}\int_\Omega \tilde R(\varphi,f,g,w)\, d\mu(w)
$$
 for all simple functions $f\in M$ and all simple functions $g\in N$.

\end{theorem}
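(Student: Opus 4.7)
The plan is to exploit the non-atomicity of $\mu$ together with the boundary conditions \eqref{cero} to translate between the discrete $RS$-abstract summing inequality with arbitrary non-negative scalar weights and the integral inequality for simple functions. The crucial link is the equivalent reformulation noted immediately before the theorem: for $p=1$, the $RS$-abstract summing condition on $u$ means exactly that there is $C>0$ with
\begin{equation*}
\sum_{i,j=1}^{n} a_{i,j}\, S(u,x_i,b_j)\le C\sup_{\varphi\in K}\sum_{i,j=1}^{n} a_{i,j}\, R(\varphi,x_i,b_j)
\end{equation*}
for arbitrary non-negative reals $a_{i,j}$ and arbitrary $x_i\in E$, $b_j\in G$.

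For the forward implication, I would write arbitrary simple functions in canonical disjoint form, $f=\sum_{i=1}^{n} x_i\chi_{A_i}\in M$ and $g=\sum_{j=1}^{m} b_j\chi_{B_j}\in N$, with $(A_i)$ and $(B_j)$ each pairwise disjoint. Using \eqref{cero}, the integrand vanishes on $\Omega\setminus\bigcup_i A_i$ (because $S(u,0,\cdot)=0=R(\varphi,0,\cdot)$) and on $\Omega\setminus\bigcup_j B_j$ (because $S(u,\cdot,0)=0=R(\varphi,\cdot,0)$), so one has the representations
\begin{equation*}
\tilde S(u,f,g,\cdot)=\sum_{i,j} S(u,x_i,b_j)\,\chi_{A_i\cap B_j},\qquad \tilde R(\varphi,f,g,\cdot)=\sum_{i,j} R(\varphi,x_i,b_j)\,\chi_{A_i\cap B_j}.
\end{equation*}
Integration turns these into finite weighted sums with the non-negative weights $a_{i,j}:=\mu(A_i\cap B_j)$, and applying the displayed inequality with these particular weights yields precisely the integral inequality of the statement.

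For the converse, given arbitrary finite families $x_1,\dots,x_n\in E$ and $b_1,\dots,b_n\in G$, I would invoke non-atomicity and finiteness of $\mu$ to choose pairwise disjoint measurable sets $D_1,\dots,D_n\subset\Omega$ of a common positive measure $\alpha=\mu(D_k)>0$. Setting $f:=\sum_{k=1}^n x_k\chi_{D_k}\in M$ and $g:=\sum_{k=1}^n b_k\chi_{D_k}\in N$, the computation of the previous paragraph leaves only diagonal terms (as $D_i\cap D_j=\emptyset$ for $i\neq j$), giving
\begin{equation*}
\int_\Omega \tilde S(u,f,g,w)\,d\mu(w)=\alpha\sum_{k=1}^n S(u,x_k,b_k),
\end{equation*}
and analogously for $\tilde R$. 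Plugging these identities into the assumed integral inequality and cancelling $\alpha>0$ recovers the $RS$-abstract summing condition for $p=1$.

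I do not expect any serious obstacle: the proof reduces to a bookkeeping argument built from the boundary conditions in \eqref{cero}, the non-atomicity of $\mu$, and the equivalent weighted form of the summing condition stated in the paper. The only point that needs a little care is verifying that \eqref{cero} genuinely annihilates the integrand outside $\bigcup_i A_i$ and outside $\bigcup_j B_j$, so that the simple-function representations of $\tilde S(u,f,g,\cdot)$ and $\tilde R(\varphi,f,g,\cdot)$ really do collapse to the finite weighted sums used in the discrete reformulation.
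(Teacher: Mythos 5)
Your proposal is correct and follows essentially the same route as the paper's own proof: the forward direction integrates the simple functions over the common refinement $A_i\cap B_j$ and applies the weighted reformulation of the $RS$-abstract summing condition with $a_{i,j}=\mu(A_i\cap B_j)$, and the converse uses non-atomicity to build disjoint sets of equal measure $\alpha$ and cancels $\alpha$, exactly as in the paper. Your explicit attention to how condition~\eqref{cero} kills the integrand off the supports is a point the paper uses implicitly, but it is the same argument.
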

\begin{proof}
($\Rightarrow$)
Take a pair of simple functions $f= \sum_{i=1}^n x_i \chi_{A_i} \in M$ and $g= \sum_{i=1}^m b_j \chi_{B_j} \in N$, with $x_1,\ldots, x_n\in E$, $b_1,\ldots, b_m\in G$, $A_1,\ldots, A_n,B_1,\ldots, B_m\in \Sigma$. Then, since we are assuming both  $u$ is  $R,S$-abstract summing and  condition~\eqref{cero},
\begin{eqnarray*}
\int_\Omega \tilde S(u,f,g,w) \,d \mu(w) &=& \int_\Omega S(u,f(w),g(w))\,  d \mu(w) \\
&=& \sum_{i=1}^n \sum_{j=1}^m  S(u,x_i,b_j) \mu(A_i \cap B_j) \\
&\le& C \sup_{\varphi \in K} \sum_{i=1}^n \sum_{j=1}^m  R(\varphi,x_i,b_j) \mu(A_i \cap B_j)\\
&=& C\sup_{\varphi \in K} \int_\Omega R(\varphi, \sum_{i=1}^n x_i \chi_{A_i}, \sum_{j=1}^m b_j \chi_{B_j} ) \,d \mu(w)\\
&=& C \sup_{\varphi \in K} \int_\Omega \tilde R(\varphi, f, g, w)\, d \mu(w).\\
\end{eqnarray*}

($\Leftarrow$)  Consider $x_1,...,x_n \in E$ and $b_1,...,b_n \in G$. Since the measure is non-atomic, there are pairwise disjoint measurable sets $A_1,...,A_n$ in $\Sigma$ such that $\mu(A_1)=...=\mu(A_n)=: \alpha >0$.
Then
\begin{eqnarray*}
\alpha \sum_{i=1}^n S(u,x_i,b_i) &= &\sum_{i=1}^n \mu(A_i) S(u, x_i, b_i)
\ = \int_\Omega \sum_{i=1}^n S(u, x_i,b_i) \chi_{A_i}(w)\,  d \mu(w)\\
&=& \int_\Omega S(u, \sum_{i=1}^n x_i \chi_{A_i}(w), \sum_{i=1}^n b_i \chi_{A_i}(w)) \,d \mu(w)\\
&=&  \int_\Omega \tilde S(u, \sum_{i=1}^n x_i \chi_{A_i}, \sum_{i=1}^n b_i \chi_{A_i},w) \, d \mu(w)\\
&\le& C \sup_{\varphi \in K} \int_\Omega \tilde R(u, \sum_{i=1}^n x_i \chi_{A_i}, \sum_{i=1}^n b_i \chi_{A_i},w) \, d \mu(w)\\
&=&C \sup_{\varphi \in K} \int_\Omega \sum_{i=1}^n R(\varphi, x_i, b_i) \chi_{A_i}(w) \, d\mu(w) \\
&=&C \sup_{\varphi \in K}  \sum_{i=1}^n R(\varphi, x_i, b_i) \mu({A_i} ) \ = \ \alpha C \sum_{i=1} R(\varphi,x_i,b_i).\\
\end{eqnarray*}
The proof is done.
\end{proof}

\medskip
One of the main examples of Theorem \ref{prinint} is  Diestel's result \cite{Di}, where $u:X  \to Y$ is  absolutely summing if and only if its associated composition operator carries strongly measurable  Pettis integrable functions to Bochner integrable functions. It can be easily seen that it is a particular case of our theorem, once we consider the completion of simple functions to the space of Pettis integrable functions.

\begin{corollary}\label{Diestel}
Let $X$ and $Y$ be Banach spaces, and let $u:X\to Y$ be a continuous linear operator.  Then $u$ is absolutely summing if and only if $\tilde u:{\mathcal P}_1(\mu;X)\to {\mathcal B}_1(\mu;Y)$ is well-defined and continuous.
\end{corollary}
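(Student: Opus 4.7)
I will deduce the corollary from Theorem~\ref{prinint} by a careful specialisation of the abstract data, followed by a density argument that promotes the simple-function inequality to the whole Pettis space. Choose $K=B_{X^*}$ with its weak-$*$ topology (compact Hausdorff), $E=X$, $G=\mathbb R$, $H=\mathcal L(X,Y)$, let $M$ be the strongly measurable simple $X$-valued functions and $N$ the simple scalar functions, and set
\[
S(u,x,b)=|b|\,\|u(x)\|,\qquad R(\varphi,x,b)=|b|\,|\langle x,\varphi\rangle|.
\]
Both expressions vanish when the second or third argument is zero, fulfilling condition \eqref{cero}, and $R_{x,b}(\varphi)=|b|\,|\langle x,\varphi\rangle|$ is weak-$*$ continuous in $\varphi$. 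By homogeneity, absorbing $|b_j|$ into $x_j$, the $RS$-abstract $1$-summing inequality in this situation is equivalent to the classical definition of an absolutely ($1$-)summing operator $u:X\to Y$.

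Choosing $g\equiv 1\in N$, the conclusion of Theorem~\ref{prinint} becomes
\[
\int_\Omega\|u(f(w))\|\,d\mu(w)\le C\sup_{x'\in B_{X^*}}\int_\Omega|\langle f(w),x'\rangle|\,d\mu(w),
\]
i.e.\ $\|\tilde u f\|_{\mathcal B_1}\le C\,\|f\|_{\mathcal P_1}$ for every simple $f\in M$. The $(\Leftarrow)$ direction of the corollary is then immediate: continuity of $\tilde u:\mathcal P_1(\mu;X)\to\mathcal B_1(\mu;Y)$ restricts to this inequality on simple functions, and Theorem~\ref{prinint} forces $u$ to be absolutely summing.

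For $(\Rightarrow)$, assuming $u$ absolutely summing, the inequality above holds on simple functions with $C=\pi_1(u)$. Given an arbitrary $f\in\mathcal P_1(\mu;X)$, I approximate $f$ in the Pettis norm by a sequence of simple functions $(f_n)\subset M$. The inequality applied to $f_n-f_m$ makes $(\tilde u f_n)$ Cauchy in $\mathcal B_1(\mu;Y)$, hence convergent to some $g\in\mathcal B_1(\mu;Y)$. Passing to a subsequence along which $f_n\to f$ a.e.\ and $\tilde u f_n\to g$ a.e., continuity of $u$ identifies $g$ a.e.\ with $\tilde u f$. Thus $\tilde u f\in\mathcal B_1(\mu;Y)$ with $\|\tilde u f\|_{\mathcal B_1}\le\pi_1(u)\,\|f\|_{\mathcal P_1}$, proving both well-definedness and continuity of $\tilde u$.

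The only non-formal step is the density of simple functions in $\mathcal P_1(\mu;X)$: unlike the Bochner case, $\|f\|$ need not be integrable for $f\in\mathcal P_1$, so dominated convergence is not directly available. The density rests instead on the characterisation of Pettis integrability for strongly measurable $f$ by the uniform integrability of $\{x'\circ f:x'\in B_{X^*}\}$, combined with a Vitali-type argument that is uniform in $x'\in B_{X^*}$; this uniformity is precisely what the norm $\|\cdot\|_{\mathcal P_1}$ packages for us.
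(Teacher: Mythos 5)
Your specialisation of $R$ and $S$, the reduction to the simple-function inequality via Theorem~\ref{prinint}, and the density-of-simple-functions strategy are exactly the paper's approach, and your $(\Leftarrow)$ direction is fine. The genuine gap is in the identification step of $(\Rightarrow)$: you take an arbitrary sequence of simple functions $f_n\to f$ in the Pettis norm and ``pass to a subsequence along which $f_n\to f$ a.e.'' Pettis-norm convergence does not give this. For example, on $\Omega=[0,1]$ with Lebesgue measure and $X=\ell^2$, the simple functions $g_N=\sum_{i=1}^N e_i\chi_{[(i-1)/N,\,i/N)}$ satisfy $\|g_N\|_{\mathcal P_1}=\sup_{\|x'\|_2\le 1}\frac{1}{N}\sum_{i=1}^N|x'_i|=N^{-1/2}\to 0$, yet $\|g_N(w)\|=1$ for every $w$, so no subsequence converges a.e.\ to $f=0$. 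Hence your identification of the $\mathcal B_1$-limit $g$ with $u\circ f$ does not follow as written.

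The step is repairable in two ways. (i) Build the approximating sequence so that a.e.\ convergence holds by construction: set $A_n=\{\|f\|\le n\}$; the uniform integrability of $\{x'\circ f:x'\in B_{X^*}\}$ (which you already invoke for density) gives $f\chi_{A_n}\to f$ in $\|\cdot\|_{\mathcal P_1}$, and each $f\chi_{A_n}$ is Bochner integrable, hence approximable by simple functions simultaneously in $\|\cdot\|_{\mathcal B_1}$ (which dominates $\|\cdot\|_{\mathcal P_1}$) and a.e. (ii) Do what the paper does: quote from \cite{Di} that $\tilde u:\mathcal P_1(\mu;X)\to\mathcal P_1(\mu;Y)$ is already well defined and continuous for \emph{every} bounded $u$; then $g$ and $\tilde u(f)$ are both $\|\cdot\|_{\mathcal P_1}$-limits of $\tilde u(f_n)$, hence equal, and no pointwise argument is needed. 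This second route is the one spelled out in the proof of Theorem~\ref{sigma}; apart from this step, your proof coincides with the paper's.
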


\begin{proof}
  Consider $E =X$, $K=B_{X^{*}}$
and $G=\mathbb{K}$ (the scalar field). Take $\mathcal{H}=\mathcal L(X,Y)$ the space of all continuous linear operators from $X$
into $Y$ and define $R$ and $S$ by:
\[
R\colon B_{X^{*}}\times X\times\mathbb{K} \longrightarrow[0,\infty) ~,~
R(\varphi,x,\lambda)=\vert\lambda\vert\vert\varphi(x)\vert
\]
\[
S\colon \mathcal L(X,Y)\times X\times\mathbb{K} \longrightarrow[0,\infty) ~,~
S(T,x,\lambda)=\vert\lambda\vert\Vert T(x)\Vert.
\]
With $R$ and $S$ so defined, a linear operator $u\colon X\longrightarrow Y$ is $RS$-abstract $1$-summing if and only if it is absolutely summing.
The fact that $\tilde u:{\mathcal P}_1(\mu;X)\to {\mathcal P}_1(\mu;Y)$ is continuous and the density of the simple functions in ${\mathcal P}_1(\mu;X)$ gives the result. The reader can find this argument written in a more precise way in the proof of the forthcoming Theorem \ref{sigma}.
\end{proof}

\section{The case of $(1,\sigma)$-absolutely continuous operators}

This section is devoted to  $(1,\sigma)$-absolutely continuous operators. Our aim is to show the power of our main result ---Theorem \ref{prinint}--- by showing that the classical result by Diestel (Corollary \ref{Diestel}) is in fact an extreme case of a chain of results that characterize integrability of the functions in terms of the summability properties of the operator $u$. The idea is that absolutely summing operators are the starting point (for $\sigma=0$) of a series ---ordered by inclusion--- of classes of linear operators, whose ending point (for $\sigma=1$) is the class of all continuous linear operators. The class that holds the position $\sigma$, for $0\leq \sigma\leq 1$, of this chain is the class of $(1,\sigma)$-absolutely continuous operators. According to this interpolating idea, we introduce the space ${\mathcal P}_p^\sigma(\mu;X)$, that can be considered as the interpolated class, for $0\leq \sigma\leq 1$ and $1 \le p < \infty$, between ${\mathcal P}_p(\mu;X)$ and ${\mathcal B}_p(\mu;X)$.

Our aim is to show that an operator $u:X \to Y$ is $(1,\sigma)$-absolutely continuous if and only if its associated composition operator $\tilde u:{\mathcal P}_1(\mu;X)\to {\mathcal P}_1(\mu;Y)$ carries $(1/(1-\sigma),\sigma)$-Pettis integrable functions to $1/(1-\sigma)$-Bochner integrable functions. Several new results concerning the improvement of integrability via summability of operators are  derived.

We need first some definitions. Let $(\Omega,\Sigma,\mu)$ be a non-atomic finite measure space. Consider $0 \le \sigma \le 1$ and $1 \le p < \infty$, and the space ${\mathcal S}_{p}^\sigma(\mu,X)$ of all equivalence classes with respect to $\mu$ of simple  functions with values in the Banach space $X$. Since all of  the functions in it are simple, they satisfy that
$$
 \big( | \langle f(w), x' \rangle |^{1-\sigma} \|f(w)\|^\sigma \big)^p \in L^1(\mu)
$$
for all ${x' \in X^*},$ and
$$
\Phi_{p,\sigma}(f):= \sup_{x' \in B_{X^*}}  \Big( \int \big( | \langle f(w), x' \rangle |^{1-\sigma} \|f(w)\|^\sigma \big)^p \, d \mu  \Big)^{1/p} < \infty.
$$
A seminorm for this space can be given by the convexification $\|\cdot \|_{p,\sigma}$ of $\Phi_{p,\sigma}$ defined by
$$
\|f\|_{p,\sigma}:= \inf \big\{ \sum_{i=1}^n \Phi_{p,\sigma}(f_i): f=\sum_{i=1}^n f_i \big\},
$$
for $f\in {\mathcal S}_p^\sigma(\mu,X)$ .
Since for all $p$ and $\sigma,$ $\|\cdot\|_{\mathcal{P}_p} \le \Phi_{p,\sigma}(\cdot)$, we have that $\|\cdot\|_{p,\sigma}$ is in fact a norm, and ${\mathcal S}_p^\sigma(\mu,X) \subseteq {\mathcal P}_{p}(\mu,X)$ continuously for all $0 \le \sigma \le 1$.

Write $\overline{S_p^\sigma(\mu;X)}$ for the completion of $(S_p^\sigma(\mu;X), \|\cdot \|_{p,\sigma})$.
Define the space ${\mathcal P}_p^\sigma(\mu,X)$ by
$$
{\mathcal P}_p^\sigma(\mu,X):=\overline{S_p^\sigma(\mu;X)}\cap {\mathcal P}_p(\mu;X),
$$
endowed with the norm induced by $\overline{S_p^\sigma(\mu;X)}$.

For the applications,  the case $p=1$ will be relevant.
We will use the fact that given a continuous linear operator $u:X\to Y$, then $\tilde u:{\mathcal P}_1(\mu;X) \to {\mathcal P}_1(\mu;Y) $ given by $\tilde u(f)=u\circ f$, is well-defined and continuous (see the proof of the theorem in \cite{Di}).


\begin{theorem}\label{sigma}
Let $0 \le \sigma \le 1$.
An operator  $u:X \to Y$ is $(1,\sigma)$-absolutely continuous if and only if the composition operator $\tilde{u}:{\mathcal P}_{1/(1-\sigma)}^\sigma(\mu,X)\to {\mathcal B}_{1/(1-\sigma)}(\mu,Y)$ given by $\tilde u(f):=u\circ f$ is well defined and continuous.
\end{theorem}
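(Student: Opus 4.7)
The plan is to derive Theorem \ref{sigma} as a concrete instance of Theorem \ref{prinint}, following the blueprint used for Corollary \ref{Diestel} but with data adapted to the $(1,\sigma)$-absolutely continuous class. Specifically, I would take $E=X$, $G=\mathbb{K}$, $K=B_{X^*}$ with the weak-star topology, $\mathcal{H}=\mathcal{L}(X,Y)$, and set
\[
R(\varphi,x,\lambda):=|\lambda|\bigl(|\varphi(x)|^{1-\sigma}\|x\|^{\sigma}\bigr)^{1/(1-\sigma)},\qquad S(u,x,\lambda):=|\lambda|\,\|u(x)\|^{1/(1-\sigma)}.
\]
The first step is to check that, with this data, $u$ is $RS$-abstract $1$-summing if and only if it is $(1,\sigma)$-absolutely continuous: taking all $\lambda_i=1$ and raising both sides of the $RS$-summing inequality to the $(1-\sigma)$-th power recovers the defining inequality verbatim, while general scalars can be reinserted by the homogeneity identities $R(\varphi,|\lambda|^{1-\sigma}x,1)=|\lambda|R(\varphi,x,1)$ and $S(u,|\lambda|^{1-\sigma}x,1)=|\lambda|S(u,x,1)$.

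Theorem \ref{prinint} then characterizes this summability by the integral estimate
\[
\int_\Omega |g|\,\|u\circ f\|^{1/(1-\sigma)}\,d\mu \le C\sup_{\varphi\in B_{X^*}}\int_\Omega |g|\bigl(|\varphi\circ f|^{1-\sigma}\|f\|^{\sigma}\bigr)^{1/(1-\sigma)}\,d\mu
\]
holding for every simple $X$-valued $f$ and every simple scalar $g$. Specializing to $g\equiv 1$ and taking $(1-\sigma)$-th roots yields exactly
\[
\|\tilde u(f)\|_{\mathcal{B}_{1/(1-\sigma)}}\le C^{1-\sigma}\,\Phi_{1/(1-\sigma),\sigma}(f)
\]
on simple functions; conversely, this estimate yields the $RS$-summing inequality via the equal-measure partition trick from the $(\Leftarrow)$ half of Theorem \ref{prinint}. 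Combined with $\|f\|_{1/(1-\sigma),\sigma}\le\Phi_{1/(1-\sigma),\sigma}(f)$ (take the one-term trivial decomposition) and linearity of $\tilde u$, the subadditivity built into the definition of the convexified norm upgrades this to $\|\tilde u(f)\|_{\mathcal{B}_{1/(1-\sigma)}}\le C^{1-\sigma}\|f\|_{1/(1-\sigma),\sigma}$ for all simple $f$.

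It remains to extend $\tilde u$ continuously to all of $\mathcal{P}_{1/(1-\sigma)}^{\sigma}(\mu,X)$. The last estimate lets $\tilde u$ extend by completion to a bounded operator from $\overline{S_{1/(1-\sigma)}^{\sigma}(\mu,X)}$ into $\mathcal{B}_{1/(1-\sigma)}(\mu,Y)$, but I expect the main obstacle to be identifying this abstract extension with the pointwise composition $u\circ f$ almost everywhere. The argument I would give is that $\|\cdot\|_{1/(1-\sigma),\sigma}$ dominates the Pettis norm $\|\cdot\|_{\mathcal{P}_{1/(1-\sigma)}}$, so any approximating sequence $(f_n)$ of simple functions tending to $f\in\mathcal{P}_{1/(1-\sigma)}^{\sigma}$ in $\|\cdot\|_{1/(1-\sigma),\sigma}$ also converges to $f$ in Pettis norm; since $\tilde u$ is always continuous on $\mathcal{P}_{1/(1-\sigma)}$, one has $u\circ f_n\to u\circ f$ in Pettis norm, while the Bochner limit $g$ of the Cauchy sequence $(u\circ f_n)$ likewise converges to $g$ in Pettis norm. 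This forces $g=u\circ f$ as Pettis classes and hence $\mu$-a.e., so $\tilde u$ really is the composition operator. The reverse implication then comes for free, since continuity of $\tilde u$ on $\mathcal{P}_{1/(1-\sigma)}^{\sigma}$ restricts to the simple-function estimate needed to re-enter Theorem \ref{prinint}.
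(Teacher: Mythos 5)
Your proposal is correct and follows essentially the same route as the paper: the same choice of $R(\varphi,x,\lambda)=|\lambda|\,|\varphi(x)|\,\|x\|^{\sigma/(1-\sigma)}$ and $S(u,x,\lambda)=|\lambda|\,\|u(x)\|^{1/(1-\sigma)}$, the identification of $RS$-abstract $1$-summability with $(1,\sigma)$-absolute continuity, the application of Theorem \ref{prinint} to get the simple-function estimate, the passage to the convexified norm via subadditivity of the Bochner norm, and the identification of the density extension with $u\circ f$ through the continuity of $\tilde u$ on the Pettis space. The only cosmetic deviations (allowing a general simple scalar $g$ before specializing to $\chi_\Omega$, and carrying out the limit identification in $\mathcal{P}_{1/(1-\sigma)}$ rather than $\mathcal{P}_1$) do not change the argument.
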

\begin{proof} Let us see first that $(1,\sigma)$-absolutely continuous operators are $R,S$-abstract $1$-summing for suitable $R$ and $S$.
Take the functions
$S: \mathcal{L}(X,Y) \times X \times \mathbb R \to \mathbb R^+$ given by
$S(u,x,a):=\|u(x)\|^{1/(1 - \sigma)}|a|$,
 and $R: B_{X^*} \times X \times  \mathbb R \to \mathbb R^+$ by
 $ R(x',x,b) := |\langle x, x' \rangle|^{} \|x\|^{\sigma/(1-\sigma)}|b|$.

For this $R$ and $S$, an operator $u:X \to Y$ is $RS$-abstract $1$-summing if and only if there is a constant $C>0$ such that for each pair of finite sets $x_1,...,x_n \in X$ and $b_1,...,b_n \in \mathbb R$,
\begin{eqnarray*}
\sum_{i=1}^n \|u(x_i)\|^{1/(1-\sigma)}|b_i|&=& \sum_{i=1}^n S(u,x_i,b_i)  \\
& \le&  C \sup_{x' \in B_{X^*}} \sum_{i=1}^n R(x',x_i,b_i) \\
&=& C \sup_{x' \in B_{X^*}} \sum_{i=1}^n |\langle x_i, x' \rangle|^{} \|x_i\|^{\sigma/(1-\sigma)}|b_i|,\\
\end{eqnarray*}
 that is, if the operator is $(1,\sigma)$-absolutely continuous.

The second step of the proof consists of defining  maps $\tilde R$ and $\tilde S$ in order to apply Theorem \ref{prinint}.
  Take  ${\mathcal M}(\mu;X)$ be the set of all strongly measurable functions with values in $X$.
 Define now
$\tilde S: \mathcal{L}(X,Y) \times {\mathcal M}(\mu,X) \times \{ \chi_{\Omega} \} \times \Omega \to \mathbb R^+$ given by
$$
\tilde S(u,f,\chi_\Omega, \omega):= S(u,f(\omega),\chi_\Omega(\omega)) =\|u \circ f(\omega)\|^{1/(1 - \sigma)},
$$
 and
 $
 \tilde R: B_{X^*} \times {\mathcal M}(\mu,X) \times  \{ \chi_{\Omega} \} \times \Omega \to \mathbb R^+
 $
  by
 $$
 \tilde R(x',f,\chi_\Omega, \omega) := R(x',f(\omega),\chi_\Omega(\omega))= |\langle f(\omega), x' \rangle|^{} \|f\|^{\sigma/(1-\sigma)}.
 $$

An application of Theorem \ref{prinint} shows that $u$ is $(1,\sigma
)$-absolutely continuous if, and only if,
\begin{equation}
\int_{\Omega}\Vert u\circ f(w)\Vert^{1/(1-\sigma)}\,d\mu(w)\leq C\sup
_{x^{\prime}\in B_{X^{\ast}}}\int_{\Omega}\left\vert \left\langle
f(w),x^{\prime}\right\rangle \right\vert \left\Vert f(w)\right\Vert
^{\sigma/\left(  1-\sigma\right)  }\,d\mu(w),\label{q11}%
\end{equation}
for all simple function $f\in{\mathcal{M}}(\mu;X)$. This proves that $u$ is
$(1,\sigma)$-absolutely continuous if, and only if, $u_{0}:{\mathcal{S}%
}_{1/(1-\sigma)}^{\sigma}(\mu,X)\rightarrow{\mathcal{B}}_{1/(1-\sigma)}%
(\mu;Y),$ given by $u_{0}(f):=u\circ f$, is continuous. In fact, from
(\ref{q11}) we have%

\begin{align*}
& \left(  \int_{\Omega}\Vert u\circ f(w)\Vert^{1/(1-\sigma)}\,d\mu(w)\right)
^{1-\sigma}\\
&\leq C^{1-\sigma}\left(  \sup_{x^{\prime}\in B_{X^{\ast}}}%
\int_{\Omega}\left\vert \left\langle f(w),x^{\prime}\right\rangle \right\vert
\left\Vert f(w)\right\Vert ^{\sigma/\left(  1-\sigma\right)  }\,d\mu
(w)\right)  ^{1-\sigma}\\
\end{align*}
and since the expression in the left is a norm, we also have for $f=%
{\textstyle\sum_{i}}
f_{i}$,%
\begin{align*}
&  \left(  \int_{\Omega}\Vert u\circ%
{\textstyle\sum_{i}}
f_{i}(w)\Vert^{1/(1-\sigma)}\,d\mu(w)\right)  ^{1-\sigma}\\
&  \leq%
{\textstyle\sum_{i}}
\left(  \int_{\Omega}\Vert u\circ f_{i}(w)\Vert^{1/(1-\sigma)}\,d\mu
(w)\right)  ^{1-\sigma}\\
&  \leq%
{\textstyle\sum_{i}}
C^{1-\sigma}\left(  \sup_{x^{\prime}\in B_{X^{\ast}}}\int_{\Omega}\left\vert
\left\langle f_{i}(w),x^{\prime}\right\rangle \right\vert \left\Vert
f_{i}(w)\right\Vert ^{\sigma/\left(  1-\sigma\right)  }\,d\mu(w)\right)
^{1-\sigma},
\end{align*}
and thus%
\begin{align*}
&  \left(  \int_{\Omega}\Vert u\circ f(w)\Vert^{1/(1-\sigma)}\,d\mu(w)\right)
^{1-\sigma}\\
&  \leq C^{1-\sigma}\inf_{f=%
{\textstyle\sum_{i}}
f_{i}}%
{\textstyle\sum_{i}}
\left(  \sup_{x^{\prime}\in B_{X^{\ast}}}\int_{\Omega}\left\vert \left\langle
f_{i}(w),x^{\prime}\right\rangle \right\vert \left\Vert f_{i}(w)\right\Vert
^{\sigma/\left(  1-\sigma\right)  }\,d\mu(w)\right)  ^{1-\sigma}\\
&  =C^{1-\sigma}\left\Vert f\right\Vert _{p,\sigma}.
\end{align*}

Note that $u_0(f)=\tilde u(f)$ for all $f$ in ${\mathcal S}_{1/(1-\sigma)}^\sigma(\mu,X)$. The last step of the proof consists of proving that the composition operator  ${u}_0:{\mathcal S}_{1/(1-\sigma)}^\sigma(\mu,X)\to {\mathcal B}_{1/(1-\sigma)}(\mu,Y)$ can be extended to the whole space ${\mathcal P}_{1/(1-\sigma)}^\sigma(\mu,X)$, i.e. that the operator    $\tilde{u}:{\mathcal P}_{1/(1-\sigma)}^\sigma(\mu,X)\to {\mathcal B}_{1/(1-\sigma)}(\mu,Y)$ is well-defined and continuous. This is a direct consequence of the following argument.
The operator  ${u}_0:{\mathcal S}_{1/(1-\sigma)}^\sigma(\mu,X)\to {\mathcal B}_{1/(1-\sigma)}(\mu,Y)$ can be extended by continuity to an operator $U: {\mathcal P}_{1/(1-\sigma)}^\sigma(\mu,X)\to {\mathcal B}_{1/(1-\sigma)}(\mu,Y)$. We have to show that $U(f)=\tilde u(f)$ for all  $f$  in ${\mathcal P}_{1/(1-\sigma)}^\sigma(\mu,X)$. Fix $f \in {\mathcal P}_{1/(1-\sigma)}^\sigma(\mu,X)$ and take a sequence $(f_n) \in {\mathcal S}_{1/(1-\sigma)}^\sigma(\mu,X)$ such that $f_n \to f$ in the norm $\|\cdot\|_{{\mathcal P}_{1/(1-\sigma)}^\sigma}$ of ${\mathcal P}_{1/(1-\sigma)}^\sigma(\mu,X)$.
Since ${\mathcal P}_{1/(1-\sigma)}^\sigma(\mu,X) \subseteq {\mathcal P}_{1}(\mu,X)$ continuously, the sequence $(f_n)$ converges to $f$ for the norm $\|\cdot \|_{{\mathcal P}_1}$. As $\tilde u:{\mathcal P}_1(\mu;X) \to {\mathcal P}_1(\mu;Y) $ is continuous,  $\tilde{u}(f_n)$ converges to the function $\tilde{u}(f) \in \mathcal{P}_1(\mu,Y)$. It follows that
$$
U(f)=\|\cdot\|_{{\mathcal P}_{1/(1-\sigma)}^\sigma}-\lim_n U(f_n)=\|\cdot\|_{{\mathcal P}_{1}}-\lim_n U(f_n)= \|\cdot\|_{{\mathcal P}_{1}}-\lim_n \tilde{u}(f_n) = \tilde{u}(f) ,
$$
and so, $\tilde{u}(f)$ is Bochner $1/(1-\sigma)$-integrable. This gives the result.
\end{proof}

Let us finish this section with two relevant examples.

\begin{example}
Let $K$ be a compact Hausdorff topological space, and consider the space $C(K)$  of all continuous functions defined on it endowed with the usual sup norm. If $1 \le p \le \infty$  and  $\nu$ is a measure,
we denote as usual by $L_p(\nu)$ the space of all (classes of equivalence of) $p$-integrable functions.
\begin{itemize}
\item[(1)] Assume that $\nu$ is a regular Borel probability measure on $K$ and consider the canonical $p$-summing identification map  $j_p:C(K)\to L_p(\nu)$. By Lemma \ref{le} the mapping $j_p$ is $(1,\sigma)$- absolutely continuous for any $0\leq \sigma\leq 1$. Let us show how $j_p$ improves integrability by a straightforward application of Theorem \ref{sigma}. Let $\sigma= 1/p'$ (that is,  $p=1/(1-\sigma)$) and consider a strongly measurable function  $f:\Omega\to C(K)$.  If $f$ belongs to ${\mathcal P}_p^\sigma(\mu;C(K))$ then, $j_p\circ f:\Omega \to L_p(\nu)$ belongs to ${\mathcal B}_p(\mu;L_p(\nu))$.

\item[(2)] Take $\nu$ any $\sigma$-finite measure. Let $1\leq r\leq 2$ and $2\leq
q<\infty$, and define $\sigma_{0}:=1/q^{\prime}$. Then $\tilde{u}%
:{\mathcal{P}}_{q}^{\sigma_{0}}(\mu;C(K))\rightarrow{\mathcal{B}}_{q}%
(\mu;L_{r}(\nu))$ is well-defined and continuous for any $u\in{\mathcal{L}%
}(C(K);L_{r}(\nu))$. In fact, from \cite[Th.3.5]{DJT95} we know that
$\mathcal{L}(C(K);L_{r}(\nu))=\Pi_{2}(C(K);L_{r}(\nu))$. Since $1/(1-\sigma
_{0})=q\geq 2$,  by Lemma
\ref{le}  we obtain
\[
\Pi_{2}(C(K);L_{r}(\nu))\subseteq \Pi_{q}(C(K);L_{r}(\nu))\subseteq\Pi
_{1}^{\sigma_0}(C(K);L_{r}(\nu))
\]

and the Theorem \ref{sigma} gives the result.

\end{itemize}

\end{example}

\section{Applications: $(p,q)$-summing operators and integrability} \label{4}

The following inclusion property will be  useful in this section. It follows easily from the definitions; note that the case $\sigma_2 =1$ gives the Bochner $p$-norm. Recall that we are considering  a finite and non-atomic measure space $(\Omega, \Sigma,\mu)$.

\begin{lemma} \label{leinc}
Let $1 \le p < \infty$ and $0 \le \sigma_1 \le \sigma_2  \le 1$.
For a simple function $f$, $\Phi_{p,\sigma_1}(f) \le \Phi_{p, \sigma_2}(f)$. Consequently, ${\mathcal P}_p^{\sigma_2}(\mu,X) \subseteq {\mathcal P}_p^{\sigma_1}(\mu,X)$ with continuous inclusion.
\end{lemma}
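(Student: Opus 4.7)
The plan is to reduce everything to a simple pointwise inequality inside the integrand of $\Phi_{p,\sigma}$. Fix a simple function $f$, $w\in\Omega$ with $f(w)\neq 0$, and $x'\in B_{X^*}$. Since $|\langle f(w),x'\rangle|\le \|f(w)\|$, setting $t(w):=|\langle f(w),x'\rangle|/\|f(w)\|\in[0,1]$ we may rewrite
$$
|\langle f(w),x'\rangle|^{1-\sigma}\|f(w)\|^{\sigma}=\|f(w)\|\,t(w)^{1-\sigma}.
$$
Because $t(w)\in[0,1]$ and the exponent $1-\sigma$ decreases as $\sigma$ increases, the map $\sigma\mapsto t(w)^{1-\sigma}$ is non-decreasing on $[0,1]$. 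Hence the integrand defining $\Phi_{p,\sigma_1}$ is pointwise (in $w$) dominated by that defining $\Phi_{p,\sigma_2}$.

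Raising to the $p$-th power, integrating against $\mu$, taking $p$-th roots and finally the supremum over $x'\in B_{X^*}$ yields $\Phi_{p,\sigma_1}(f)\le \Phi_{p,\sigma_2}(f)$, which is the first assertion. For the norm inequality $\|f\|_{p,\sigma_1}\le \|f\|_{p,\sigma_2}$, I would take any decomposition $f=\sum_i f_i$ of simple functions, apply the pointwise bound to each $f_i$ to obtain $\sum_i \Phi_{p,\sigma_1}(f_i)\le \sum_i\Phi_{p,\sigma_2}(f_i)$, and pass to the infimum over all such decompositions.

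Finally, for the continuous inclusion of the $\mathcal{P}$-spaces, I would extend the identity map from simple functions by density. Given $f\in {\mathcal P}_p^{\sigma_2}(\mu,X)$, pick simple functions $(f_n)$ with $\|f_n-f\|_{p,\sigma_2}\to 0$. By the norm inequality just proved, $(f_n)$ is also Cauchy in $\|\cdot\|_{p,\sigma_1}$, hence converges to some $\tilde f\in \overline{\mathcal S_p^{\sigma_1}(\mu;X)}$. Both $\|\cdot\|_{p,\sigma_1}$ and $\|\cdot\|_{p,\sigma_2}$ dominate $\|\cdot\|_{{\mathcal P}_p}$ on simple functions, so $(f_n)$ also converges to $f$ and to $\tilde f$ in $\|\cdot\|_{{\mathcal P}_p}$; therefore $\tilde f=f$ in $\mathcal P_p(\mu;X)$, and $f\in{\mathcal P}_p^{\sigma_1}(\mu,X)$ with $\|f\|_{p,\sigma_1}\le\|f\|_{p,\sigma_2}$.

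The argument is essentially routine; the only place requiring a small amount of care is the last step, to ensure that the abstract completion with respect to $\|\cdot\|_{p,\sigma_1}$ is consistent with the ambient space $\mathcal P_p(\mu;X)$ in which both completions are realized. This is handled by the fact, already observed just after the definition of $\|\cdot\|_{p,\sigma}$, that $\|\cdot\|_{{\mathcal P}_p}\le \Phi_{p,\sigma}$, and hence $\|\cdot\|_{{\mathcal P}_p}\le \|\cdot\|_{p,\sigma}$, so limits in either stronger norm coincide with limits in $\mathcal P_p$.
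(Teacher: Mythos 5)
Your proof is correct and follows essentially the same route as the paper: the key step is the pointwise domination of the integrand $|\langle f(w),x'\rangle|^{1-\sigma_1}\|f(w)\|^{\sigma_1}\le |\langle f(w),x'\rangle|^{1-\sigma_2}\|f(w)\|^{\sigma_2}$ coming from $|\langle f(w),x'\rangle|\le\|f(w)\|$, which you phrase via the ratio $t(w)$ and the paper phrases by factoring out $|\langle f(w),x'\rangle|^{\sigma_2-\sigma_1}$. Your final two paragraphs merely spell out the "direct consequence of the definitions" that the paper leaves implicit, and they do so correctly.
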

\begin{proof}
Since for every  ${x' \in B_{X^*}},$ we have that $| \langle f(w),x' \rangle | \le \|f(w)\|$, the inequality
$$
\Phi_{p,\sigma_1}(f)^p = \sup_{x' \in B_{X^*}}  \, \int \big( | \langle f(w), x' \rangle |^{1-\sigma_1} \|f(w)\|^{\sigma_1} \big)^p \, d \mu
$$
$$
 =
 \sup_{x' \in B_{X^*}}  \, \int \big( | \langle f(w), x' \rangle |^{1-\sigma_2} | \langle f(w), x' \rangle |^{\sigma_2-\sigma_1}  \|f(w)\|^{\sigma_1} \big)^p \, d \mu
$$
$$
\le  \sup_{x' \in B_{X^*}}  \, \int \big( | \langle f(w), x' \rangle |^{1-\sigma_2} \|f(w)\|^{\sigma_2-\sigma_1}  \|f(w)\|^{\sigma_1} \big)^p \, d \mu
$$
$$
= \sup_{x' \in B_{X^*}}  \, \int \big( | \langle f(w), x' \rangle |^{1-\sigma_2} \|f(w)\|^{\sigma_2} \big)^p \, d \mu = \Phi_{p,\sigma_2}(f)^p
$$
holds. The continuous inclusion in the statement  is a direct consequence of this inequality and the definitions.
\end{proof}



The following three results are direct applications of Theorem \ref{sigma} under the hypothesis of coincidences  between the operator ideals $\Pi_p$, $\Pi_1^\sigma$ and $\Pi_{(\frac{1}{1-\sigma},1)}$. They are the source of a lot of particular applications, that will be written in the next subsection.

\begin{corollary} \label{c1}
Let $1 \leq s<\infty$ and $\sigma=1/s'$. If $u\in \Pi_s(X;Y)$ then $\tilde u:{\mathcal P}_s^\sigma(\mu;X)\to {\mathcal B}_s(\mu;Y)$ is well-defined and continuous. In general, if $u\in \Pi_s(X;Y)$ and $1/s' \le \sigma_1,$  then $\tilde u:{\mathcal P}_s^{\sigma_1}(\mu;X)\to {\mathcal B}_s(\mu;Y)$ is well-defined and continuous.
\end{corollary}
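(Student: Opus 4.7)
The plan is to reduce the statement to Theorem \ref{sigma} by exploiting the inclusions between the ideals $\Pi_s$ and $\Pi_1^\sigma$ provided by Lemma \ref{le}. First I would observe the arithmetic identity $s=1/(1-\sigma)$ when $\sigma=1/s'$, which is immediate from $1/s+1/s'=1$. Plugging $p=1$ into Lemma \ref{le} gives
\[
\Pi_1 \;\subseteq\; \Pi_{1/(1-\sigma)} \;=\; \Pi_s \;\subseteq\; \Pi_1^\sigma \;\subseteq\; \Pi_{s,1},
\]
so the hypothesis $u\in\Pi_s(X;Y)$ already forces $u$ to be $(1,\sigma)$-absolutely continuous.

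Now Theorem \ref{sigma} applies directly to $u$ with this value of $\sigma$, yielding that the composition operator $\tilde u:{\mathcal P}_{1/(1-\sigma)}^\sigma(\mu;X)\to {\mathcal B}_{1/(1-\sigma)}(\mu;Y)$ is well-defined and continuous. Rewriting $1/(1-\sigma)=s$ gives exactly the first assertion $\tilde u:{\mathcal P}_s^\sigma(\mu;X)\to {\mathcal B}_s(\mu;Y)$.

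For the second assertion, given $\sigma_1 \ge 1/s' = \sigma$, I would invoke Lemma \ref{leinc} with parameters $\sigma \le \sigma_1$ (and $p=s$) to obtain the continuous inclusion
\[
{\mathcal P}_s^{\sigma_1}(\mu;X) \;\hookrightarrow\; {\mathcal P}_s^{\sigma}(\mu;X).
\]
Composing this inclusion with the continuous operator $\tilde u:{\mathcal P}_s^\sigma(\mu;X)\to {\mathcal B}_s(\mu;Y)$ produced in the first part yields the desired continuous map $\tilde u:{\mathcal P}_s^{\sigma_1}(\mu;X)\to {\mathcal B}_s(\mu;Y)$.

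There is no real obstacle here: all the heavy lifting was done in Theorem \ref{sigma} and Lemmas \ref{le}, \ref{leinc}, and the corollary is essentially a two-step diagram chase through these inclusions. The only subtlety worth mentioning in the write-up is to verify carefully that the parameter matching $s=1/(1-\sigma)$ is compatible with the statement of Lemma \ref{le}, so that the chain of ideal inclusions lands exactly in $\Pi_1^\sigma$, which is the class for which Theorem \ref{sigma} is formulated.
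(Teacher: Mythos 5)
Your proposal is correct and follows essentially the same route as the paper: identify $s=1/(1-\sigma)$, use Lemma \ref{le} (with $p=1$) to get $\Pi_s\subseteq\Pi_1^\sigma$, apply Theorem \ref{sigma}, and handle the second assertion via the continuous inclusion ${\mathcal P}_s^{\sigma_1}(\mu;X)\hookrightarrow{\mathcal P}_s^{\sigma}(\mu;X)$ from Lemma \ref{leinc}. No discrepancies to report.
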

\begin{proof} The case $s=1$ reduces to Diestel's theorem.
First note that $s=1/(1-\sigma)$. Then by Lemma \ref{le}, if $u \in \Pi_s(X,Y)$, we have that $u \in \Pi_1^\sigma (X,Y)$. By Theorem \ref{sigma} we have then that the operator
$$
\tilde{u}: {\mathcal P}_s^\sigma (\mu;X) = {\mathcal P}_{1/(1-\sigma)}^\sigma(\mu,X)\to {\mathcal B}_{1/(1-\sigma)}(\mu,Y) = {\mathcal B}_s(\mu;Y)
$$
is well-defined and continuous. The second statement is a consequence of Lemma \ref{leinc}, since ${\mathcal P}_s^{\sigma_1} (\mu;X) \subseteq {\mathcal P}_s^\sigma(\mu;X)$ continuously.
\end{proof}

The next results are obvious, we write them as corollaries for further use.

\begin{corollary} \label{c2}
Let $1\leq p<\infty$ and $\sigma=1/p'$. Assume that $\Pi_{(p;1)}(X;Y)=\Pi_1^\sigma (X,Y).$ Then $u\in \Pi_{(p;1)}(X;Y)$ if and only if  $\tilde u:{\mathcal P}_p^\sigma(\mu;X)\to {\mathcal B}_p(\mu;Y)$ is well-defined and continuous.
\end{corollary}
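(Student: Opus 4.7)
The plan is to deduce this corollary directly from Theorem \ref{sigma} after translating the exponents correctly. The key observation is that with $\sigma = 1/p'$ we have $1-\sigma = 1/p$, hence $1/(1-\sigma) = p$. Under this identification, the spaces appearing in Theorem \ref{sigma} specialize exactly to the ones in the statement: $\mathcal{P}_{1/(1-\sigma)}^{\sigma}(\mu,X) = \mathcal{P}_{p}^{\sigma}(\mu,X)$ and $\mathcal{B}_{1/(1-\sigma)}(\mu,Y) = \mathcal{B}_{p}(\mu,Y)$.

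With this translation in hand, I would apply Theorem \ref{sigma} to obtain that $u$ is $(1,\sigma)$-absolutely continuous, i.e. $u \in \Pi_{1}^{\sigma}(X,Y)$, if and only if $\tilde{u} : \mathcal{P}_{p}^{\sigma}(\mu;X) \to \mathcal{B}_{p}(\mu;Y)$ is well-defined and continuous. Then I would invoke the standing hypothesis $\Pi_{(p;1)}(X;Y) = \Pi_{1}^{\sigma}(X,Y)$ to replace $\Pi_{1}^{\sigma}(X,Y)$ by $\Pi_{(p;1)}(X;Y)$ in the equivalence, which is exactly the conclusion.

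There is no real obstacle here: the work has already been done in Theorem \ref{sigma}, and the corollary is just a re-reading of that theorem under the coincidence assumption between the two operator ideals. The only thing to be careful about is the arithmetic relating $\sigma$, $p$ and $p'$, which is immediate from $\sigma = 1/p'$.
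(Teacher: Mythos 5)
Your proposal is correct and matches the paper's intent exactly: the paper labels this corollary as ``obvious'' and gives no written proof, precisely because it is the direct specialization of Theorem \ref{sigma} via $\sigma=1/p'$ (so $1/(1-\sigma)=p$) combined with the assumed coincidence $\Pi_{(p;1)}(X;Y)=\Pi_1^\sigma(X,Y)$. Nothing further is needed.
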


\begin{corollary} \label{c3}
Let $1\leq p\leq q<\infty$ and $\sigma=1/q'$. Assume that ${\mathcal L}(X;Y)=\Pi_{p}(X;Y).$ Then $\tilde u:{\mathcal P}_q^\sigma(\mu;X)\to {\mathcal B}_q(\mu;Y)$ is well-defined and continuous for any $u\in {\mathcal L}(X;Y)$.
\end{corollary}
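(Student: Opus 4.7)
The plan is to derive Corollary \ref{c3} as an immediate chaining of (i) a standard inclusion of summing operator ideals and (ii) Corollary \ref{c1} already proved in the excerpt; there is no real obstacle beyond invoking the right result at the right place.

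First, I would exploit the hypothesis $\mathcal{L}(X;Y)=\Pi_p(X;Y)$ together with the classical monotonicity $\Pi_p(X;Y)\subseteq\Pi_q(X;Y)$ for $1\le p\le q<\infty$ (this is a standard fact in the theory of absolutely summing operators, a consequence of Pietsch domination or, alternatively, of the special case $\sigma_1=\sigma_2=0$ of the inclusion in \eqref{000} of the excerpt). Combining these, for any $u\in\mathcal{L}(X;Y)$ we obtain $u\in\Pi_q(X;Y)$.

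Next, with $s:=q$ and $\sigma=1/s'=1/q'$, Corollary \ref{c1} applied to this $u$ yields directly that
\[
\tilde u:\mathcal{P}_q^{\sigma}(\mu;X)\longrightarrow\mathcal{B}_q(\mu;Y)
\]
is well-defined and continuous, which is exactly the conclusion.

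Since the argument reduces to invoking Corollary \ref{c1} after an ideal inclusion, the proof is essentially one line; the only point worth emphasising is that the hypothesis $p\le q$ is used precisely to pass from $\Pi_p$ to $\Pi_q$, so that the value of $\sigma=1/q'$ matches the one dictated by Corollary \ref{c1} (namely $\sigma=1/s'$ with $s=q$). No use of Lemma \ref{leinc} is needed here, since the target $\mathcal{P}_q^{\sigma}(\mu;X)$ and the exponent $q$ are already aligned with Corollary \ref{c1}.
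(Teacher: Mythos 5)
Your proof is correct and follows essentially the same route as the paper: the paper chains $\Pi_p(X;Y)\subseteq\Pi_q(X;Y)\subseteq\Pi_1^\sigma(X;Y)$ (Lemma \ref{le}) and then applies Theorem \ref{sigma}, whereas you package the last two steps as an invocation of Corollary \ref{c1} with $s=q$, which is proved by exactly that chain. The substance is identical.
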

\begin{proof}
It follows from the inclusions $\Pi_p(X;Y)\subset \Pi_{q}(X;Y)\subset \Pi_{1,\sigma}(X;Y)$ (Lemma \ref{le}) and  Theorem \ref{sigma}.
\end{proof}

In what follows we deal with classical Banach spaces. We will use some results on coincidence of some classical operator ideals with the interpolated class of the ideals of $(p,\sigma)$-absolutely continuous operators; our main source is the paper \cite{Matt87}. We will also use some classical results.

\subsection{Operators improving integrability of Hilbert space valued functions}

Consider a Hilbert space $L^2$. It is well-known (see \cite[Corollary 11.16]{DJT95}) that for each $1 \le p < \infty$, $\Pi_p(L^2,L^2)=\Pi_1(L^2,L^2)$. Let $1 \le p < \infty$ and let $\mathcal A_p(L^2,L^2)$ be the corresponding component of the ideal of the $p$-approximable operators. It is well-known that $\Pi_2(L^2,L^2)=\mathcal A_2(L^2,L^2)$ (see e.g. \cite[Theorem 20.5.1]{Jar} or \cite[Theorem 4.10]{DJT95}). In this setting, we can obtain the following

\begin{corollary}
Let $\mu$ be a non-atomic finite positive measure.
Let $1 \le p < \infty$ and $\sigma=1/p'$ and consider  $u:L^2 \to L^2$ a $2p$-approximable operator. Then
the composition operator $\tilde{u}:{\mathcal P}_p^\sigma(\mu,L^2)\to {\mathcal B}_{p}(\mu,L^2)$  is well defined and continuous.

\end{corollary}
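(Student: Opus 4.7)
The plan is to reduce the statement to verifying that $u\in\Pi_1^\sigma(L^2,L^2)$ via Theorem \ref{sigma}, and then to identify the class $\Pi_1^\sigma(L^2,L^2)$ with a suitable Schatten (equivalently, approximable) class.

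First, I would substitute $\sigma=1/p'$ into Theorem \ref{sigma}. Since $1-\sigma=1/p$ and $1/(1-\sigma)=p$, Theorem \ref{sigma} says exactly that
$$
u\in\Pi_1^{1/p'}(L^2,L^2)\iff \tilde u\colon \mathcal{P}_p^{1/p'}(\mu,L^2)\to \mathcal{B}_p(\mu,L^2)\ \text{is well defined and continuous.}
$$
So the problem reduces to showing the inclusion $\mathcal{A}_{2p}(L^2,L^2)\subseteq \Pi_1^{1/p'}(L^2,L^2)$.

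Second, I would establish this inclusion via two ingredients. On one hand, in a Hilbert space the approximation numbers coincide with the singular values, so $\mathcal{A}_{2p}(L^2,L^2)=\mathcal{S}_{2p}(L^2,L^2)$ (the Schatten $2p$-class). On the other hand, I would invoke the coincidence result of Matt87 for $(1,\sigma)$-absolutely continuous operators between Hilbert spaces, which identifies $\Pi_1^\sigma(L^2,L^2)$ with $\mathcal{S}_{2/(1-\sigma)}(L^2,L^2)$. With $\sigma=1/p'$ one has $2/(1-\sigma)=2p$, so $\Pi_1^{1/p'}(L^2,L^2)=\mathcal{S}_{2p}(L^2,L^2)=\mathcal{A}_{2p}(L^2,L^2)$, giving the desired (in fact, two-sided) inclusion. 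Applying the reduction from the first step then concludes the proof.

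The main obstacle is the inclusion $\mathcal{S}_{2p}(L^2,L^2)\subseteq \Pi_1^{1/p'}(L^2,L^2)$, since Lemma \ref{le} together with $\Pi_p(L^2,L^2)=\Pi_1(L^2,L^2)=\mathcal{S}_2(L^2,L^2)$ only yields $\mathcal{S}_2(L^2,L^2)\subseteq \Pi_1^\sigma(L^2,L^2)$, which is strictly weaker than what we need for $p>1$. The substantive step therefore must exploit the finer Hilbert-space structure: either by invoking the Schatten-class coincidence from Matt87, or by direct verification using the singular value decomposition $u=\sum\lambda_n\langle\cdot,e_n\rangle f_n$ with $(\lambda_n)\in\ell_{2p}$ and a Hölder-type estimate on the defining inequality of $\Pi_1^{1/p'}$, splitting $\|u(x_i)\|=\|u(x_i)\|^{1-\sigma}\|u(x_i)\|^\sigma$ and pairing the two factors against $|\langle x_i,x'\rangle|^{1-\sigma}$ and $\|x_i\|^\sigma$ respectively. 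Once the inclusion is in hand, the corollary follows at once from Theorem \ref{sigma}.
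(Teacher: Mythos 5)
Your proposal is correct and follows essentially the same route as the paper: reduce via Theorem \ref{sigma} to showing $u\in\Pi_1^{1/p'}(L^2,L^2)$, and then invoke Matter's coincidence result identifying $\Pi_1^\sigma(L^2,L^2)$ with the $\frac{2}{1-\sigma}$-approximable (equivalently, Schatten $\frac{2}{1-\sigma}$-class) operators, which for $\sigma=1/p'$ gives exactly $\mathcal{A}_{2p}(L^2,L^2)$. Your remark that Lemma \ref{le} alone is insufficient and that the Hilbert-space coincidence is the substantive ingredient is accurate and matches the paper's reliance on \cite[Prop.5.1]{Matt87}.
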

\begin{proof}
As a consequence of \cite[Prop.5.1]{Matt87} and the comments above, we know that for every $0 \le \sigma < 1$,  $\Pi_1^\sigma(L^2,L^2= \Pi_2^\sigma(L^2,L^2)=\mathcal A_{\frac{2}{1-\sigma}}(L^2,L^2)$. Theorem \ref{sigma} gives the result.
\end{proof}

\subsection{Operators improving integrability of $\mathcal{L}^\infty$ and $C(K)$ valued functions}

Consider a space $L^\infty(\nu)$ for any measure $\nu$. By \cite[Prop.5.2]{Matt87}, for every Banach space $E$ and each  $1 \le p < \infty$, $0 \le \sigma < 1$ and $\varepsilon >0$, we have that
$$
\Pi_{\frac{p}{1-\sigma}}(L^\infty,E) \subseteq \Pi_{p}^\sigma(L^\infty,E)   \subseteq \Pi_{\frac{p}{1-\sigma} + \varepsilon}(L^\infty,E).
$$
This provides (see (1) below) a partial converse to Corollary \ref{c1}, which we reproduce for $L^\infty$-spaces in part (2) of the next result.

\begin{corollary}
Fix $1 \le q < p < \infty$, $\sigma=1/q'$, and consider a linear operator $u:L^\infty(\mu) \to E$.
\begin{itemize}
\item[(1)] If $u$
satisfies that its associated composition operator  $\tilde{u}$ is well-defined and continuous from ${\mathcal P}_q^\sigma(\mu,L^\infty)$ to ${\mathcal B}_{q}(\mu,E)$, then $u$ is $p$-summing.
\item[(2)]
In the case that $u$ is $q$-summing, then $\tilde{u}$ is well-defined and continuous from ${\mathcal P}_q^\sigma(\mu,L^\infty)$ to ${\mathcal B}_{q}(\mu,E)$
\end{itemize}
\end{corollary}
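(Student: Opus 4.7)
The plan is to reduce both parts to Theorem \ref{sigma} together with the two-sided inclusion from \cite[Prop.5.2]{Matt87} quoted just before the statement. Since $\sigma = 1/q'$ forces $1/(1-\sigma) = q$, Theorem \ref{sigma} identifies continuity of $\tilde u : \mathcal{P}_q^\sigma(\mu, L^\infty) \to \mathcal{B}_q(\mu, E)$ with the assertion that $u \in \Pi_1^\sigma(L^\infty, E)$. The two halves of the corollary then follow by applying, respectively, the left and right sides of the $L^\infty$-specific chain
$$
\Pi_q(L^\infty, E) \;\subseteq\; \Pi_1^\sigma(L^\infty, E) \;\subseteq\; \Pi_{q+\varepsilon}(L^\infty, E)
$$
(the Matt87 inclusion with $p = 1$, since $1/(1-\sigma) = q$).

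For part (2), I would assume $u \in \Pi_q(L^\infty, E)$. Lemma \ref{le} (or, equivalently, the left-hand inclusion above) gives $u \in \Pi_1^\sigma(L^\infty, E)$, and Theorem \ref{sigma} converts this into the required continuity of $\tilde u : \mathcal{P}_q^\sigma(\mu, L^\infty) \to \mathcal{B}_q(\mu, E)$. This is really just Corollary \ref{c1} specialized to $X = L^\infty(\mu)$ and is the easier direction; no use of the $L^\infty$ hypothesis is needed here.

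For part (1), the argument runs backward through Theorem \ref{sigma} and then uses the $L^\infty$-specific converse from Matt87. The continuity hypothesis on $\tilde u$, via Theorem \ref{sigma}, yields $u \in \Pi_1^\sigma(L^\infty, E)$. Now I would invoke the right-hand inclusion above: for every $\varepsilon > 0$ one has $\Pi_1^\sigma(L^\infty, E) \subseteq \Pi_{q+\varepsilon}(L^\infty, E)$. Choosing $\varepsilon := p - q$, which is strictly positive by hypothesis, gives $u \in \Pi_p(L^\infty, E)$, as desired.

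The only real subtlety is absorbing the $\varepsilon$-loss built into the Matt87 converse. The strict inequality $q < p$ in the hypothesis is precisely what makes this possible; the borderline case $p = q$ is not reachable by this route, which explains the form in which the statement is phrased. The $L^\infty$ assumption on the domain is essential for part (1) since the right-hand inclusion in the Matt87 chain is genuinely specific to $C(K)$- or $L^\infty$-type spaces, whereas part (2) holds in full generality.
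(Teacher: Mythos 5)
Your proposal is correct and follows essentially the same route as the paper: part (2) is exactly the specialization of Corollary \ref{c1} to $s=q$, and part (1) uses the ``only if'' direction of Theorem \ref{sigma} to get $u\in\Pi_1^\sigma(L^\infty,E)$ and then the right-hand Matter inclusion $\Pi_1^\sigma(L^\infty,E)\subseteq\Pi_{q+\varepsilon}(L^\infty,E)$ with $\varepsilon=p-q>0$. Your remark about why the strict inequality $q<p$ is needed to absorb the $\varepsilon$-loss is a correct reading of why the statement is phrased as it is.
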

\begin{proof}
For the proof of (1), just note that by Theorem \ref{sigma}, $u$ belongs to  $\Pi_{1}^\sigma(L^\infty,E)$. The right hand side inclusion before the corollary gives (1). Item (2) is a particular case of Corollary \ref{c1}.
\end{proof}

As a consequence of a result of Pisier, it can be proved that the classes of $(\frac{1}{1-\sigma},1)$-summing operators and the class of $(1,\sigma)$-absolutely continuous operators coincide on $C(K)$-spaces (see \cite[Th.2.4 (ii)]{pisier} and the characterization of $(p,\sigma)$-absolutely continuous operators given by \cite[Th.4.1 (ii)]{Matt87} for the particular case of $C(K)$-spaces; see also \cite{LS97}). Recall that a consequence of Pisier's result is that for $1 \le q < p < \infty$, the $(p,q)$-summing operators acting in $C(K)$-spaces coincide with the $(p,1)$ summing ones. Thus, for a compact Hausdorff space $K$ and a Banach space $E$, we have the following

\begin{corollary}
Let $1 \le q < p < \infty$ and let $\sigma = 1/p'$. An operator $u:C(K) \to E$ is $(p,q)$-summing if and only if the composition operator $\tilde{u}$ from ${\mathcal P}_p^\sigma(\mu,C(K))$ to ${\mathcal B}_{p}(\mu,E)$ is well defined and continuous.
\end{corollary}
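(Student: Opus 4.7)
The strategy is to chain together two coincidences of operator ideals on $C(K)$-spaces with the master result Theorem \ref{sigma}, noting that when $\sigma = 1/p'$ we have $1/(1-\sigma) = p$, so that Theorem \ref{sigma} reads exactly as the statement that $u$ is $(1,\sigma)$-absolutely continuous if and only if $\tilde u : \mathcal{P}_p^\sigma(\mu,C(K)) \to \mathcal{B}_p(\mu,E)$ is well defined and continuous. The plan is therefore to identify $\Pi_{p,q}(C(K),E)$ with $\Pi_1^\sigma(C(K),E)$ and then invoke Theorem \ref{sigma}.

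First I would record the two coincidences already cited in the paragraph immediately before the corollary. By Pisier's theorem, for $1 \le q < p < \infty$ one has
\[
\Pi_{p,q}(C(K),E) = \Pi_{p,1}(C(K),E),
\]
so the $(p,q)$-summing hypothesis can be replaced, without loss of generality, by the $(p,1)$-summing hypothesis. Second, by combining \cite[Th.~2.4(ii)]{pisier} with the characterization of $(p,\sigma)$-absolutely continuous operators in \cite[Th.~4.1(ii)]{Matt87}, one obtains the identity
\[
\Pi_{p,1}(C(K),E) = \Pi_1^\sigma(C(K),E),
\]
which is exactly the instance $\sigma = 1/p'$ of the hypothesis of Corollary \ref{c2} applied to $C(K)$. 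Together these two identifications yield $\Pi_{p,q}(C(K),E) = \Pi_1^\sigma(C(K),E)$.

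Now I would apply Theorem \ref{sigma} directly. Since $1/(1-\sigma) = p$, Theorem \ref{sigma} asserts that $u \in \Pi_1^\sigma(C(K),E)$ if and only if $\tilde u : \mathcal{P}_p^\sigma(\mu,C(K)) \to \mathcal{B}_p(\mu,E)$ is well defined and continuous. Combined with the identification in the previous paragraph, this is precisely the desired equivalence. Alternatively, one can present the argument as a direct application of Corollary \ref{c2}, whose hypothesis $\Pi_{p,1}(X,Y) = \Pi_1^\sigma(X,Y)$ is guaranteed in the present setting $X=C(K)$ by Pisier's theorem; the $(p,q)$-summing case is absorbed by the first coincidence above.

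The only subtlety I would expect is bookkeeping about which Pisier-type coincidence is being invoked: the equality $\Pi_{p,q} = \Pi_{p,1}$ on $C(K)$ for $q<p$ is a separate statement from the identification $\Pi_{p,1}(C(K),\cdot) = \Pi_1^\sigma(C(K),\cdot)$, and one must cite both. Beyond that, the proof is a direct assembly of previously established results, with no genuine analytic obstacle.
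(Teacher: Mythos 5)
Your proposal is correct and follows exactly the route the paper intends: the paper proves this corollary implicitly via the paragraph preceding it, citing Pisier's coincidence $\Pi_{p,q}=\Pi_{p,1}$ on $C(K)$-spaces together with the identification of $\Pi_{\frac{1}{1-\sigma},1}(C(K),E)$ with $\Pi_1^\sigma(C(K),E)$, and then invoking Theorem \ref{sigma} (equivalently Corollary \ref{c2}). Your bookkeeping of the two separate coincidences is exactly the right care to take; nothing is missing.
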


\subsection{Operators improving integrability of $\mathcal{L}^1$-spaces valued functions}

The case of $(p,\sigma)$-absolutely continuous operators from $\mathcal{L}^1$-spaces has been intensively studied (see \cite[\S 6]{Matt87}), so we can find a lot of applications of our results in this case. Let $ 0 \le \sigma <1$. Following the notation in this paper ---and taking into account the comments just after the definition in \cite[page 201]{Matt87}---, we define
$$
H_\sigma =:\big\{ E \, \textit{Banach}: \mathcal L(L^1,E) = \Pi_1^\sigma(L^1,E) \, \textit{for all $L^1$-spaces} \big\}.
$$

\begin{corollary}
Fix an $\mathcal{L}^1$-space $L^1$ and a non-atomic finite positive measure $\mu$.
Let $1 \le p < \infty$ and $\sigma=1/p'$. Consider a Banach space $E \in H_\sigma$.
For every operator  $u:L^1 \to E$, the composition operator $\tilde{u}$ from ${\mathcal P}_p^\sigma(\mu,L^1)$ to ${\mathcal B}_{p}(\mu,E)$ is well defined and continuous.
\end{corollary}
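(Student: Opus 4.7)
The plan is to observe that this corollary is essentially an immediate application of Theorem \ref{sigma} once the hypothesis $E\in H_\sigma$ is unpacked. First I would verify the arithmetic relating $p$ and $\sigma$: the assumption $\sigma=1/p'$ gives $1-\sigma = 1/p$, hence $1/(1-\sigma)=p$, so the target spaces ${\mathcal P}_{1/(1-\sigma)}^{\sigma}(\mu,L^1)$ and ${\mathcal B}_{1/(1-\sigma)}(\mu,E)$ appearing in Theorem \ref{sigma} are exactly ${\mathcal P}_p^\sigma(\mu,L^1)$ and ${\mathcal B}_p(\mu,E)$.

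Next I would invoke the defining property of the class $H_\sigma$. By definition, $E\in H_\sigma$ means ${\mathcal L}(L^1,E) = \Pi_1^\sigma(L^1,E)$ for every $\mathcal{L}^1$-space; therefore any continuous linear operator $u:L^1\to E$ is automatically $(1,\sigma)$-absolutely continuous.

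Finally, I would apply Theorem \ref{sigma} to such a $u$: since $u\in\Pi_1^\sigma(L^1,E)$, its composition operator $\tilde u(f):=u\circ f$ is well defined and continuous from ${\mathcal P}_{1/(1-\sigma)}^{\sigma}(\mu,L^1)$ to ${\mathcal B}_{1/(1-\sigma)}(\mu,E)$, which by the arithmetic in the first step is the desired statement. There is no real obstacle here; the content of the result lies in the prior identification of the class $H_\sigma$ and in Theorem \ref{sigma} itself. The only thing to take care of is bookkeeping the index $\sigma=1/p'$ correctly when passing from the abstract formulation of Theorem \ref{sigma} to the present statement.
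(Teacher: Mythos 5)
Your proposal is correct and is exactly the argument the paper intends (the paper states this corollary without a written proof, treating it as immediate): the hypothesis $E\in H_\sigma$ makes every $u:L^1\to E$ a $(1,\sigma)$-absolutely continuous operator, and Theorem \ref{sigma} with $1/(1-\sigma)=p$ gives the conclusion. The index bookkeeping you check is the only content needed.
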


Let us apply this general result to some relevant Banach spaces belonging to the class $H_\sigma$. First, note that since the ideal $\Pi_1^\sigma$ is injective (\cite[Th.3.2.]{Matt87}) and $\ell^1$ has the lifting property all the subspaces of the spaces in $H_\sigma$ are again in $H_\sigma$. Following the notation of Matter in  \cite{Matt87}, we say that a Banach space $E$ is $(\sigma,p)$-Hilbertian ( $\sigma$-Hilbertian) if there is an interpolation pair $(H,E_1)$ with $H$ a Hilbert space and $E_1$ a Banach space such that $E=(H,E_1)_{\sigma,p}$ ($E=[H,E_1]_\sigma$) isomorphically. Here, $( \cdot,\cdot)_{\sigma,p}$ and $[\cdot,\cdot]_\sigma$ denote the usual real and complex interpolation spaces.

\begin{corollary}
Let $1 \le p < \infty$ and $\sigma=1/p'$. Fix an $\mathcal{L}^1$-space $L^1$ and a non-atomic finite positive measure $\mu$.
Each of the following Banach spaces $E$ satisfies that for every operator $u:L^1 \to E$, the associated composition operator  $\tilde{u}:{\mathcal P}_p^\sigma(\mu,L^1) \to {\mathcal B}_{p}(\mu,E)$ is well defined and continuous.
\begin{itemize}
\item[(1)] $E$ being a quotient of an $L^\infty$-space having cotype smaller that $\frac{2}{1-\sigma}.$

\item[(2)] $E= L^r$ for $2 \le r < \frac{2}{1-\sigma}$.

\item[(3)] $E$ being a $(\sigma,2)$-Hilbertian space.

\item[(4)] $E$ being a $\sigma$-Hilbertian space.

\item[(5)] $E$ being a Lorentz space $L_{r,s}$ for $\frac{2}{1+\sigma} < r$ and $s < \frac{2}{1-\sigma}$.

\end{itemize}

\end{corollary}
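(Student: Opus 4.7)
The plan is to reduce the statement to a single verification in each case: that the Banach space $E$ in question belongs to the class $H_\sigma$. Once this is done, the preceding corollary applies verbatim and yields the continuity of $\tilde u \colon \mathcal{P}_p^\sigma(\mu, L^1) \to \mathcal{B}_p(\mu, E)$ for every $u \in \mathcal{L}(L^1, E)$. So the entire proof is organized as ``apply the previous corollary $+$ verify $E \in H_\sigma$.''

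For each item the membership $E \in H_\sigma$ is known from the literature on $(p,\sigma)$-absolutely continuous operators, principally \cite[\S 6]{Matt87}, together with the injectivity of the ideal $\Pi_1^\sigma$ (\cite[Th.3.2]{Matt87}) and the lifting property of $\ell^1$. Specifically, I would argue as follows. For (1), if $E$ is a quotient of an $L^\infty$-space with cotype $q<\tfrac{2}{1-\sigma}$, then a Maurey-type factorization together with the identity $\mathcal{L}(L^1, L^\infty) = \Pi_1^\sigma(L^1, L^\infty)$ (which in turn uses the dual factorization of operators into $L^\infty$ through Hilbert spaces) yields the coincidence $\mathcal{L}(L^1, E) = \Pi_1^\sigma(L^1, E)$; this is precisely the content of the $H_\sigma$-type results collected in \cite[\S 6]{Matt87}. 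Item (2) then follows either directly from \cite[\S 6]{Matt87} or from (1), since $L^r$ for $2 \le r < \tfrac{2}{1-\sigma}$ has cotype $r$ and can be placed in the scope of the cotype hypothesis. Items (3) and (4) are the $(\sigma,2)$-Hilbertian and $\sigma$-Hilbertian cases, and they are obtained by real/complex interpolation between the trivial identity $\mathcal{L}(L^1, H) = \Pi_1(L^1, H) = \Pi_1^0(L^1, H)$ for a Hilbert space $H$ (the extreme $\sigma = 0$ case) and the trivial identity at the other endpoint $\sigma = 1$; the interpolation statement is again \cite[\S 6]{Matt87}, and one uses that real and complex interpolation commute with the $(1,\sigma)$-absolutely continuous norm in the appropriate sense. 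Item (5), on Lorentz spaces $L_{r,s}$ with $\tfrac{2}{1+\sigma} < r$ and $s < \tfrac{2}{1-\sigma}$, is then a combination of (3)/(4) with the classical representation of $L_{r,s}$ as a real interpolation space between a Hilbert space and an $L^q$-space in the admissible range.

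The main obstacle is not in assembling the overall argument -- which is a one-line application of the preceding corollary -- but in locating and citing the correct coincidence theorem for each of the five situations, and in checking that the numerical range of $\sigma$ (respectively $r$, $s$, and the cotype) exactly matches the hypothesis needed to land in $H_\sigma$ with the given $\sigma = 1/p'$. I would do this book-keeping case by case, being explicit about which statement of \cite[\S 6]{Matt87} (and complementary references such as \cite{LS93,LS97}) is used, and then conclude uniformly with one invocation of the previous corollary.
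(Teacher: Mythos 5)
Your proposal is correct and follows essentially the same route as the paper: the paper's proof is exactly the reduction to membership in $H_\sigma$ via the preceding corollary, with each item discharged by a citation to Matter (\cite[Prop.6.5, Cor.6.6(a), Th.8.1, Th.8.2, Th.9.2(a)]{Matt87} for items (1)--(5) respectively). Your additional sketches of why those coincidence results hold (Maurey-type factorization, interpolation, the quotient-of-$L^\infty$ reduction for item (2)) go beyond what the paper records but are consistent with it; the only caveat is to cite the specific statements in \cite{Matt87} (Sections 6, 8 and 9, not Section 6 alone) rather than the section headings.
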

\begin{proof}
(1) Apply \cite[Prop.6.5]{Matt87}. (2) Apply \cite[Cor.6.6(a)]{Matt87}. (3) \cite[Th.8.1]{Matt87}. (4)  \cite[Th.8.2]{Matt87}. (5)  \cite[Th.9.2(a)]{Matt87}.
\end{proof}

Let us finish the paper with a result regarding super-reflexive Banach lattices. Recall that super-reflexive Banach spaces were the original motivation for introducing the ideal of $(p,\sigma)$-absolutely continuous operators. The following result is an application of our Theorem \ref{sigma} and Corollary 10.4 in \cite{Matt87}.

\begin{corollary}
Fix an $\mathcal{L}^1$-space $L^1$ and a non-atomic finite positive measure $\mu$.
If $F$ is a super-reflexive Banach lattice, then there is $1 \le p < \infty$
 such that for every operator $u:L^1 \to E$ the associated composition operator  $\tilde{u}:{\mathcal P}_p^\sigma(\mu,L^1) \to {\mathcal B}_{p}(\mu,F)$ is well defined and continuous,
for $\sigma=1/p'$.
\end{corollary}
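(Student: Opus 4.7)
The plan is to reduce the statement to a direct application of Theorem~\ref{sigma} by first invoking the quoted structural result for super-reflexive Banach lattices and then matching the exponent $\sigma$ with a suitable choice of $p$.

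First I would appeal to Corollary~10.4 in \cite{Matt87}, which asserts that whenever $F$ is a super-reflexive Banach lattice, there exists some $\sigma\in[0,1)$ such that
\[
\mathcal L(L^1,F)=\Pi_1^{\sigma}(L^1,F)
\]
for every $\mathcal L^1$-space $L^1$. In the language introduced just before the previous corollary, this says precisely that $F\in H_\sigma$ for this particular value of $\sigma$. The super-reflexivity hypothesis is used exactly here, so this is the point where the assumption on $F$ enters the argument.

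Next I would fix the parameter $p$ by setting $p:=1/(1-\sigma)$, so that $\sigma=1/p'$ and $1\le p<\infty$. For any continuous linear operator $u\colon L^1\to F$, the coincidence from Matter's result gives $u\in\Pi_1^{\sigma}(L^1,F)$, that is, $u$ is $(1,\sigma)$-absolutely continuous. Theorem~\ref{sigma} then applies directly and yields that the composition operator
\[
\tilde u\colon \mathcal P_{1/(1-\sigma)}^{\sigma}(\mu,L^1)\longrightarrow \mathcal B_{1/(1-\sigma)}(\mu,F)
\]
is well defined and continuous, which is exactly the claim with the chosen $p$.

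I do not foresee a real obstacle: the only nontrivial ingredient is the cited Corollary~10.4 of \cite{Matt87}, after which the argument is the same parameter-matching game used in the preceding corollaries of this section. The mild subtlety to keep in mind is that $\sigma$ is produced by the external result rather than being freely chosen, so $p=1/(1-\sigma)$ depends on $F$; the statement of the corollary respects this by only asserting the existence of some suitable $p$.
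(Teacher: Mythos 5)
Your proposal is correct and follows exactly the route the paper intends: the paper gives no written-out proof for this corollary, stating only that it is ``an application of our Theorem \ref{sigma} and Corollary 10.4 in \cite{Matt87},'' which is precisely your argument of extracting $\sigma$ from Matter's result (so that $F\in H_\sigma$), setting $p=1/(1-\sigma)$, and invoking Theorem \ref{sigma}. Your remark that $p$ depends on $F$ through the externally produced $\sigma$ is a sensible clarification of why the statement is existential in $p$.
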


{\bf Acknowledgments:} D. Pellegrino acknowledges with thanks the support of CNPq Grant 313797/2013-7 (Brazil). P. Rueda acknowledges with thanks the support of the Ministerio de
Econom\'{\i}a y Competitividad (Spain) MTM2011-22417. E.A. S\'anchez P\'erez
acknowledges with thanks the support of the Mi\-nisterio de Econom\'{\i}a y
Competitividad (Spain) MTM2012-36740-C02-02.

\end{document}